\pdfoutput=1
\documentclass[12pt]{amsart}
\usepackage[margin=1in,includefoot,footskip=30pt]{geometry}
\geometry{letterpaper}
\usepackage[T1]{fontenc}
\usepackage{txfonts}
\usepackage{amsmath,amsthm}
\usepackage{hyperref}

\newtheorem{theorem}{Theorem}
\newtheorem{lemma}{Lemma}

\theoremstyle{definition}

\newcommand{\RR}{\ensuremath{\mathbb{R}}}
\newcommand{\ZZ}{\ensuremath{\mathbb{Z}}}

\makeatletter
\newcommand{\leqnomode}{\tagsleft@true}
\newcommand{\reqnomode}{\tagsleft@false}
\makeatother

\begin{document}
\title{Dilated Floor Functions That Commute}
\author{Jeffrey C. Lagarias}
\address{Dept.\ of Mathematics, University of Michigan, Ann Arbor, MI 48109--1043}
\email{\href{mailto:lagarias@umich.edu}{lagarias@umich.edu}}
\author{Takumi Murayama}
\address{Dept.\ of Mathematics, University of Michigan, Ann Arbor, MI 48109--1043}
\email{\href{mailto:takumim@umich.edu}{takumim@umich.edu}}
\author{D. Harry Richman}
\address{Dept.\ of Mathematics, University of Michigan, Ann Arbor, MI 48109--1043}
\email{\href{mailto:hrichman@umich.edu}{hrichman@umich.edu}}
\makeatletter
\hypersetup{pdftitle={\@title},pdfauthor={Jeffrey C. Lagarias, Takumi Murayama, and D. Harry Richman}}
\makeatother

\date{November 16, 2016}
\keywords{digital straightness, floor function, generalized polynomial}
\subjclass[2000]{Primary 39B12; Secondary 11J54, 68U05}

\begin{abstract}
  We determine all pairs of real numbers $(\alpha, \beta)$
  such that the dilated floor functions
  $\lfloor \alpha x\rfloor$ and $\lfloor \beta x\rfloor$
  commute under composition, i.e., such that
  $\lfloor \alpha \lfloor \beta x\rfloor\rfloor = \lfloor \beta \lfloor \alpha
  x\rfloor\rfloor$ holds for all real $x$.
\end{abstract}
\maketitle

\section{Introduction}\label{sec1}
The {\em floor function}
$\lfloor x \rfloor$ rounds a real number down to the nearest integer.
The {\em ceiling function} $\lceil x \rceil$,
which rounds up to the nearest integer,  satisfies
\begin{equation}\label{ceiling-identity}
  \lceil x \rceil = - \lfloor -x \rfloor.
\end{equation}
These two fundamental operations discretize (or quantize) real numbers
in different ways. 
The names {\em floor function} and {\em ceiling function}, along with
their notations, were coined in 1962 by Kenneth E. Iverson \cite[p.\ 12]{Iver62},
in connection with the programming language $APL$.
Graham, Knuth, and Patashnik \cite[Chap.\ 3]{GKP94} note this history and
give many interesting properties of these functions.

We study the floor function applied to a linear function $\ell_{\alpha}(x) =
\alpha x$, yielding the {\em dilated} floor function
$f_{\alpha}(x) = \lfloor \alpha x \rfloor$, where $\alpha$ is a real number. Dilated floor
functions arise in constructing digital straight lines, which are ``lines'' drawn
on two-dimensional graphic displays using pixels, and are discussed further below.
This note addresses the question: {\em When do two dilated floor
functions commute under composition of functions?} Linear functions always commute
under composition and satisfy the identities
\begin{equation}\label{linear-identity}
  \ell_{\alpha} \circ \ell_{\beta}(x) = \ell_{\beta} \circ \ell_{\alpha} (x) =
  \ell_{\alpha \beta}(x) \quad \text{for all}\ x \in \RR.
\end{equation}
However, discretization generally destroys such commutativity. We have the
following.
%
%
\begin{theorem} \label{thm:1}
  The complete set of all $(\alpha, \beta) \in \RR^2$ such that
  \[
    \lfloor \alpha \lfloor \beta x\rfloor \rfloor
    = \lfloor \beta \lfloor \alpha x\rfloor \rfloor
  \]
  holds for all $x \in \RR$ consists of:
  \begin{enumerate}
    \item[(i)]
      three continuous families $(\alpha, \alpha)$, $(\alpha, 0)$, $(0, \alpha)$ for
      all $\alpha \in \RR$;
    \item[(ii)]
      the infinite discrete family 
      \[
        \left\{ (\alpha, \beta) = \left(\frac{1}{m}, \frac{1}{n}\right) : m, n
        \ge 1 \right\},
      \]
      where $m,n$ are positive integers. (The families overlap when $m=n$.)
  \end{enumerate}
\end{theorem}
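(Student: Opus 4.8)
\emph{Sufficiency and first reductions.} The plan is to check the listed pairs directly and then prove there are no others by converting the identity into an arithmetic system indexed by $\ZZ$. Sufficiency is quick: the families $(\alpha,\alpha)$, $(\alpha,0)$, $(0,\alpha)$ are immediate (for the last two both sides vanish), and for the discrete family one applies the standard identity $\lfloor\lfloor y\rfloor/k\rfloor=\lfloor y/k\rfloor$ (for real $y$ and $k\in\ZZ_{\ge1}$) twice to get $\lfloor\lfloor x/n\rfloor/m\rfloor=\lfloor x/(mn)\rfloor=\lfloor\lfloor x/m\rfloor/n\rfloor$. For necessity, let $(\alpha,\beta)$ satisfy the identity for all $x$; if $\alpha=\beta$ or $\alpha\beta=0$ we are in family (i), so assume $\alpha\ne\beta$ and $\alpha,\beta\ne0$. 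Opposite signs are excluded by a small-$x$ test: if $\alpha>0>\beta$, then for $0<x<\min\{1/\alpha,1/|\beta|\}$ we have $\lfloor\beta x\rfloor=-1$ and $\lfloor\alpha x\rfloor=0$, so the left side is $\lfloor-\alpha\rfloor\le-1$ while the right side is $0$; the case $\beta>0>\alpha$ is symmetric. The case $\alpha,\beta<0$ I would treat separately: putting $x\mapsto-x$ and using $\lfloor-t\rfloor=-\lceil t\rceil$ turns the identity into $\lceil\alpha'\lfloor\beta'x\rfloor\rceil=\lceil\beta'\lfloor\alpha'x\rfloor\rceil$ with $\alpha'=-\alpha,\beta'=-\beta>0$, and the same level-set analysis as below (together with $\lfloor\alpha'\rfloor=\lfloor\beta'\rfloor$, read off from small $x<0$) forces $\alpha'=\beta'$, contradicting $\alpha\ne\beta$. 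So $\alpha,\beta>0$, and by the symmetry of the hypothesis we may assume $0<\alpha<\beta$.

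\emph{Reduction to a discrete system.} Partition $\RR$ into the level sets $I_m=[m/\beta,(m+1)/\beta)$ of $x\mapsto\lfloor\beta x\rfloor$. On $I_m$ the left side equals the constant $\lfloor\alpha m\rfloor$; meanwhile $\alpha x$ traverses an interval of length $\alpha/\beta<1$, so $\lfloor\alpha x\rfloor$ takes only the value $q_m:=\lfloor\alpha m/\beta\rfloor$, or the two values $q_m$ and $q_m+1$, the latter occurring exactly when $\{\alpha m/\beta\}>1-\alpha/\beta$ (with $\{t\}:=t-\lfloor t\rfloor$). Hence the identity for all $x$ is equivalent to the system: for each $m\in\ZZ$, $\lfloor\beta q_m\rfloor=\lfloor\alpha m\rfloor$, and moreover $\lfloor\beta(q_m+1)\rfloor=\lfloor\alpha m\rfloor$ whenever $\{\alpha m/\beta\}>1-\alpha/\beta$. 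Writing $\alpha m=\beta q_m+\beta\{\alpha m/\beta\}$ rewrites these as the fractional-part inequalities $\{\beta q_m\}+\beta\{\alpha m/\beta\}<1$ and $\{\beta q_m\}+\beta<1$; observe the second can be triggered only when $\beta<1$.

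\emph{Solving the system — the main obstacle.} The remaining work is number-theoretic. Feeding into the supplementary inequality values of $m$ making $\{\alpha m/\beta\}$ close to $1$, and using density of $\{m\theta\}$ in $[0,1)$ for irrational $\theta$, I would first force $\alpha/\beta\in\mathbb{Q}$, and then, from the main inequality with $m$ large, $\beta\in\mathbb{Q}$, hence $\alpha\in\mathbb{Q}$. Next, $\beta\le1$: if $\beta\ge1$ the supplementary case can never occur, so $\{\alpha m/\beta\}\le1-\alpha/\beta$ for all $m$, which forces $\alpha/\beta=1/b$ for an integer $b\ge2$, and then the main inequality forces $\beta=1$ and $\alpha=1/b$. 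In the remaining range $0<\alpha<\beta\le1$, write $\alpha=p/q$ in lowest terms; taking $m=q$ gives $\alpha m=p\in\ZZ$, so the main condition becomes $\lfloor\beta\lfloor p/\beta\rfloor\rfloor=p$, and since $\beta\lfloor p/\beta\rfloor\le p$ this forces $p/\beta\in\ZZ$, say $\beta=p/k$ with $k\in\ZZ_{\ge1}$. It remains to prove $p=1$; then $\alpha=1/q$ and $\beta=1/k$ with $q>k\ge1$, as claimed. For this I would write $\alpha/\beta=k/q=a/b$ and $\beta=p/k=P/S$ in lowest terms, identify the residues $r$ that arise as $q_m$ in the supplementary case (essentially those for which $a$ does not divide $r+1$), and extract from the inequalities $\{Pr/S\}<1-\beta$, by carefully tracking $\gcd(a,S)$ and the largest fractional part attained, that $P=1$ and hence $p=1$. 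This last step is the principal obstacle: it is precisely where the sporadic coincidences among the pairs $(1/m,1/n)$ are separated from near-misses such as $(2/9,1/3)$ and $(2/5,2/3)$, which satisfy $\lfloor\beta q_m\rfloor=\lfloor\alpha m\rfloor$ for every $m$ but fail the supplementary condition.
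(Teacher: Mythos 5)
Your setup is fine as far as it goes: sufficiency, the small-$x$ exclusion of opposite signs, and the reduction of the positive case to the two conditions $\lfloor\beta q_m\rfloor=\lfloor\alpha m\rfloor$ (always) and $\lfloor\beta(q_m+1)\rfloor=\lfloor\alpha m\rfloor$ (when $\{\alpha m/\beta\}>1-\alpha/\beta$) are all correct, as is the $m=q$ trick forcing $p/\beta\in\ZZ$ once $\alpha=p/q$. But the proof is not complete, and the missing part is the heart of the theorem. The step that separates the unit-fraction pairs $(1/m,1/n)$ from near-misses such as $(2/9,1/3)$ — your claim that $p=1$ — is exactly what you label ``the principal obstacle'' and leave as a plan (``carefully tracking $\gcd(a,S)$ and the largest fractional part attained''), not an argument. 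The earlier reductions are also only asserted: that the supplementary inequality together with equidistribution forces $\alpha/\beta$ rational, and that the main inequality for large $m$ then forces $\beta$ rational, are plausible but unproved claims, and nothing in the write-up shows they actually go through. Since everything listed in family (ii) and nothing else must survive precisely these steps, the necessity half of the theorem is not established.

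There is a second, smaller gap in the negative case. After $x\mapsto-x$ you obtain the mixed identity $\lceil\alpha'\lfloor\beta'x\rfloor\rceil=\lceil\beta'\lfloor\alpha'x\rfloor\rceil$, which is not the floor-of-floor identity your positive-case analysis treats, so ``the same level-set analysis as below'' cannot simply be invoked; as written this case is asserted, not proved. It is easy to repair directly: for $\alpha,\beta<0$ one has $\lfloor\alpha\lfloor\beta x\rfloor\rfloor\ge 0\iff\lfloor\beta x\rfloor\le 0\iff x>1/\beta$, and likewise the other side gives $x>1/\alpha$, so commutativity forces $\alpha=\beta$. For comparison, the paper closes the positive case (your main obstacle) quite differently: working with inverse parameters it shows commutativity is equivalent to $\beta\lceil n\alpha\rceil=\alpha\lceil n\beta\rceil$ for all $n\in\ZZ$, and then a short induction on the first $n$ with $\lceil n\alpha\rceil=n\lceil\alpha\rceil-1$ forces $\lceil\alpha\rceil=\lceil\beta\rceil$ and hence $\alpha=\beta$ unless both inverse parameters are integers — no rationality argument is needed at all. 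You would either need to supply a complete version of your fractional-part analysis or switch to an argument of that kind.
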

 
The  interesting  feature of this  classification  is the 
existence of the infinite discrete family (ii) of solutions
where commutativity survives. 
The family (ii) 
fits together to form an infinite family of
pairwise commuting functions $T_m(x) := f_{1/m}(x) = \lfloor \frac{1}{m}x \rfloor$
for integers $m \ge 1$.
Moreover, these functions  satisfy 
for all $m, n \ge 1$ the further relations
\[
  T_m \circ T_n(x)  = T_n \circ T_m (x)= T_{mn}(x)\quad \text{for all}\ x \in \RR,
\]
which are the same relations satisfied by composition of linear functions
\eqref{linear-identity}. 

One can ask an analogous question for dilated ceiling functions: {\em When do two
dilated ceiling functions commute?} The resulting classification turns out to be identical.
To see this, set $g_{\alpha}(x) := \lceil \alpha x \rceil$. 
Using the identity \eqref{ceiling-identity},
we deduce that for any $\alpha, \beta$,
\begin{equation*}\label{ceiling-commute}
  f_{\alpha} \circ f_{\beta} (x) = - g_{\alpha} \circ g_{\beta} (-x), \quad
  \mbox{for all}\ x \in \RR.
\end{equation*}
Since $x \mapsto -x$ is a 
bijection of the domain $\RR$ to itself, we see that $g_{\alpha}$ and $g_{\beta}$ commute
under composition if and only if $f_{\alpha}$ and $f_{\beta}$ commute
under composition.

The  commuting family (ii) was noted by 
 Cardinal \cite[Lemma 6]{Car10} in a number-theoretic context.
 He studied certain semigroups of integer  matrices,  constructed using
 the floor function, from which he constructed a family of symmetric integer matrices 
that  he  related to the Riemann hypothesis. 
Also from this number-theoretic perspective, symmetry properties of the  solutions 
may be important. Both sets of solutions (i) and (ii) are invariant under exchange $(\alpha, \beta)$
to $(\beta, \alpha)$. However:
\begin{enumerate}
\item[(1)]
The set  of all continuous solution parameters (i) is invariant
  under the reflection symmetry taking $(\alpha, \beta)$ to $(- \alpha, -\beta)$,
 while the discrete solutions (ii) break this symmetry. 
  \item[(2)]
  If one restricts to strictly nonzero parameters, then
  the continuous solution parameters (i) are invariant under the symmetry taking $(\alpha, \beta)$
  to $(\frac{1}{\alpha}, \frac{1}{\beta})$, while the  discrete solutions (ii) break this symmetry.
  \end{enumerate}

In the next section we prove Theorem \ref{thm:1}, and in the final section, we
discuss the problem in the general context of digital straight lines.


\section{Proof of Theorem \ref{thm:1}} \label{sec2}
Two immediate cases where commutativity holds are $\alpha=0$ or $\beta=0$: In these cases, the
functions $f_\alpha$ and $f_\beta$ commute since their
composition is the zero function. In what follows, we suppose that
$\alpha\beta \ne 0$, and  then we reparameterize the problem in terms of inverse
parameters $(1/\alpha,1/\beta)$, which will simplify the resulting formulas. 


We prove Theorem \ref{thm:1} by a case analysis that depends on the signs
of $\alpha$ and $\beta$.
The proofs analyze the jump points in the graphs of 
$f_{1/\alpha}\circ f_{1/\beta}(x)$. 
We define for real $y$ the {\it upper level set} at level $y$: 
\[
  S_{1/\alpha,1/ \beta}(y) := \{x : f_{1/\alpha} \circ f_{1/ \beta}(x) \ge y\} =
  (f_{1/\alpha} \circ f_{1/\beta})^{-1}[y,\infty).
\]
The commutativity property asserts the equality
$S_{1/\alpha,1/\beta}(n) = S_{1/\beta,1/\alpha}(n)$ of upper level sets
for all $n \in \ZZ$,
and the converse holds 
because  the range of ${f_{1/\alpha} \circ f_{1/\beta}}$ is a subset of $\ZZ$.
The key formulas are identities determining these upper level sets given in Lemmas \ref{lem:200}
and \ref{lem:201},
leading to formulas characterizing  commutativity when $\alpha, \beta >0$ and $\alpha, \beta <0$  given in Lemmas \ref{lem:2} 
and \ref{lem:5}, respectively.

\subsection*{Case 1. Both \texorpdfstring{$\alpha$}{alpha} and
\texorpdfstring{$\beta$}{beta} are positive} \label{sec31}
We begin with  a formula for the upper level sets at integer points.
\begin{lemma}\label{lem:200}
For $\alpha, \beta >0$ and each $n \in \ZZ$, the upper level set is
$$
S_{1/\alpha, 1/\beta}(n) = \left[  \beta \lceil n\alpha \rceil , \infty \right).
$$
\end{lemma}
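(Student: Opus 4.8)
The plan is to unwind the inequality $f_{1/\alpha}\circ f_{1/\beta}(x)\ge n$ through a short chain of equivalences, peeling off one floor function at a time. The only tools needed are the two elementary rounding rules: for an integer $n$ and a real $y$, one has $\lfloor y\rfloor\ge n$ if and only if $y\ge n$; and for an integer $k$ and a real $t$, one has $k\ge t$ if and only if $k\ge\lceil t\rceil$ (both standard, e.g.\ \cite[Chap.\ 3]{GKP94}). The positivity hypotheses $\alpha,\beta>0$ will be used to preserve the direction of inequalities when clearing the denominators $1/\alpha$ and $1/\beta$.

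First I would introduce $k:=\lfloor\tfrac{1}{\beta}x\rfloor\in\ZZ$, so that $x\in S_{1/\alpha,1/\beta}(n)$ means exactly $\lfloor k/\alpha\rfloor\ge n$. Since $n\in\ZZ$, the first rounding rule turns this into $k/\alpha\ge n$, and multiplying by $\alpha>0$ gives $k\ge n\alpha$; since $k\in\ZZ$, the second rounding rule rewrites this as $k\ge\lceil n\alpha\rceil$. Substituting back $k=\lfloor\tfrac{1}{\beta}x\rfloor$, and noting $\lceil n\alpha\rceil\in\ZZ$, the first rounding rule again yields $\tfrac{1}{\beta}x\ge\lceil n\alpha\rceil$, and multiplying by $\beta>0$ gives $x\ge\beta\lceil n\alpha\rceil$. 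Chaining these equivalences shows $x\in S_{1/\alpha,1/\beta}(n)$ if and only if $x\ge\beta\lceil n\alpha\rceil$, which is the asserted identity $S_{1/\alpha,1/\beta}(n)=[\beta\lceil n\alpha\rceil,\infty)$.

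I do not expect any serious obstacle here; the lemma is essentially a bookkeeping exercise. The one place to be careful is tracking which quantities are integers (namely $n$, $k=\lfloor\tfrac{1}{\beta}x\rfloor$, and $\lceil n\alpha\rceil$), so that each rounding rule is applied only when it is legitimate, and making sure the sign conditions are invoked at the two multiplication steps. Everything else is a direct substitution.
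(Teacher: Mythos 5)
Your argument is correct and is essentially identical to the paper's proof: the same chain of equivalences, peeling off the outer floor via $\lfloor y\rfloor\ge n\Leftrightarrow y\ge n$ for integer $n$, rounding $n\alpha$ up to $\lceil n\alpha\rceil$ because the left side is an integer, and using $\alpha,\beta>0$ at the two multiplication steps. Nothing to add.
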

\begin{proof}
  We have the following implications:
  \reqnomode
  \begin{alignat*}{3}
    x \in S_{1/\alpha, 1/\beta} (n) & \Leftrightarrow{} \left\lfloor \frac{1}{\alpha} \left\lfloor \frac{1}{\beta} x \right\rfloor \right\rfloor \ge n &\quad& \text{(by definition)}\\
    &\Leftrightarrow{} \frac{1}{\alpha} \left\lfloor \frac{1}{\beta} x\right\rfloor \ge n &\quad& \text{(the right side is in $\ZZ$)}\\
    &\Leftrightarrow{} \left\lfloor \frac{1}{\beta} x \right\rfloor \ge n\alpha &\quad& \text{(since $\alpha > 0$)}\\
    &\Leftrightarrow{} \left\lfloor \frac{1}{\beta} x \right\rfloor \ge \lceil n\alpha \rceil &\quad& \text{(the left side is in $\ZZ$)} \\
    &\Leftrightarrow{} \frac{1}{\beta} x \ge \lceil n\alpha \rceil &\quad& \text{(the right side is in $\ZZ$)}\\
    &\Leftrightarrow{} x \ge \beta \lceil n\alpha \rceil &\quad& \text{(since
    $\beta >0$).}\tag*{\qedhere}
  \end{alignat*}
  \leqnomode
\end{proof}

\begin{lemma}\label{lem:2}
  For $\alpha, \beta >0$, the function $f_{1/\alpha}$ commutes with $f_{1/\beta}$ if
  and only if the equality 
  \begin{equation}\label{ceiling-fact}
    \beta \lceil n \alpha \rceil = \alpha \lceil n \beta \rceil
  \end{equation}
  holds for all integers $n \in \ZZ$.
\end{lemma}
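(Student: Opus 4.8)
The plan is to read both sides of the desired identity off Lemma~\ref{lem:200}, using the reduction of commutativity to equality of upper level sets that was recorded just before the statement of Lemma~\ref{lem:200}.

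First I would invoke Lemma~\ref{lem:200} twice. Applied to the ordered pair $(\alpha,\beta)$ it gives $S_{1/\alpha,1/\beta}(n) = [\,\beta\lceil n\alpha\rceil,\infty)$ for every $n\in\ZZ$. Since the standing hypothesis $\alpha,\beta>0$ is symmetric in the two parameters, the same lemma applied to the pair $(\beta,\alpha)$ gives $S_{1/\beta,1/\alpha}(n) = [\,\alpha\lceil n\beta\rceil,\infty)$. Thus each upper level set at an integer height is a closed ray, and the two left endpoints are exactly the quantities appearing in \eqref{ceiling-fact}.

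Next I would compare the two rays. For real numbers $a$ and $b$, the closed rays $[a,\infty)$ and $[b,\infty)$ are equal if and only if $a=b$, since each such ray determines its left endpoint as its (attained) infimum. Hence $S_{1/\alpha,1/\beta}(n) = S_{1/\beta,1/\alpha}(n)$ holds if and only if $\beta\lceil n\alpha\rceil = \alpha\lceil n\beta\rceil$. Quantifying over $n\in\ZZ$ and applying the commutativity criterion --- namely that $f_{1/\alpha}$ commutes with $f_{1/\beta}$ precisely when $S_{1/\alpha,1/\beta}(n) = S_{1/\beta,1/\alpha}(n)$ for all $n\in\ZZ$ --- yields the lemma.

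I do not expect any real obstacle here: the substantive content has already been front-loaded into Lemma~\ref{lem:200} and into the observation that commutativity of two $\ZZ$-valued step functions is detected by their integer-level upper level sets. The only point worth a sentence of care is precisely that reduction to integer levels $n$ (valid because the range of $f_{1/\alpha}\circ f_{1/\beta}$ lies in $\ZZ$), which is what allows \eqref{ceiling-fact} to be phrased as a condition on integers rather than on all real arguments.
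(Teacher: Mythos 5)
Your proposal is correct and follows essentially the same route as the paper: apply Lemma~\ref{lem:200} to both orderings of the pair, observe that two closed rays $[a,\infty)$ and $[b,\infty)$ coincide exactly when $a=b$, and invoke the reduction of commutativity to equality of integer-level upper level sets. The paper's proof is just a terser version of the same argument.
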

\begin{proof}
  By Lemma \ref{lem:200}, we have
  $x \in S_{1/\alpha, 1/\beta}(n)$ if and only if $x \ge \beta \lceil n\alpha \rceil$.
  Similarly,
  $x \in S_{1/\beta, 1/\alpha}(n)$ if and only if $x \ge \alpha \lceil n\beta \rceil$,
  so that commutativity of the functions is equivalent to the desired equality
  of ceiling functions.
\end{proof}

\begin{lemma}\label{lem:3}
  For $\alpha,\beta >0$, the function $f_{1/\alpha}$ commutes with $f_{1/\beta}$
  if and only if either $\alpha = \beta$ or if
  $\alpha$ and $\beta$ are both positive integers.
\end{lemma}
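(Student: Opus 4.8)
The plan is to analyze the functional equation \eqref{ceiling-fact}, namely $\beta \lceil n\alpha\rceil = \alpha\lceil n\beta\rceil$ for all $n \in \ZZ$, provided by Lemma \ref{lem:2}. First I would dispose of the trivial direction: if $\alpha = \beta$ then \eqref{ceiling-fact} is immediate, and if $\alpha, \beta$ are positive integers then $\lceil n\alpha\rceil = n\alpha$ and $\lceil n\beta\rceil = n\beta$, so both sides equal $n\alpha\beta$. The substance is the forward direction, so assume \eqref{ceiling-fact} holds for all $n$ and $\alpha \neq \beta$; the goal is to force $\alpha, \beta \in \ZZ_{\geq 1}$.

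The key idea is to extract asymptotic and exact information by letting $n$ vary. Write $\lceil n\alpha\rceil = n\alpha + \{-n\alpha\}$ where $0 \le \{-n\alpha\} < 1$ (the fractional-part correction, which is $0$ exactly when $n\alpha \in \ZZ$). Plugging into \eqref{ceiling-fact} and using $\ell_\alpha\circ\ell_\beta = \ell_\beta\circ\ell_\alpha$, the $n\alpha\beta$ terms cancel and we get
\[
  \beta\{-n\alpha\} = \alpha\{-n\beta\} \quad\text{for all } n \in \ZZ.
\]
Since both $\{-n\alpha\}$ and $\{-n\beta\}$ lie in $[0,1)$, this equation forces, for each fixed $n$, that $n\alpha \in \ZZ$ if and only if $n\beta \in \ZZ$, and more quantitatively that $\{-n\alpha\}/\alpha = \{-n\beta\}/\beta$ whenever these are nonzero. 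I would first use this to show $\alpha$ and $\beta$ are rational: if $\alpha$ were irrational, then $\{-n\alpha\}$ is never $0$ for $n \neq 0$ and by equidistribution (or just density) $\{-n\alpha\}$ takes values arbitrarily close to $1$, giving $\beta\{-n\alpha\}$ arbitrarily close to $\beta$; but $\alpha\{-n\beta\} < \alpha$, so we would need $\beta \le \alpha$, and by the symmetric argument $\alpha \le \beta$, hence $\alpha = \beta$, contrary to assumption. So $\alpha, \beta \in \mathbb{Q}_{>0}$.

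Now write $\alpha = a/q$, $\beta = b/q$ in lowest common-denominator form with $q \geq 1$ and $\gcd$ conditions as needed. The equation $\beta\{-n\alpha\} = \alpha\{-n\beta\}$ becomes $b\{-na/q\} = a\{-nb/q\}$. Taking $n = q$ kills both fractional parts (consistent), so the real content is at $n$ coprime to the denominators. Taking $n = 1$: $b\{-a/q\} = a\{-b/q\}$, i.e. $b \cdot \frac{q - (a \bmod q)}{q} = a \cdot \frac{q - (b \bmod q)}{q}$ when neither residue is zero, which rearranges (using $ab$ cancellation again, after multiplying out $bq - b(a\bmod q) = aq - a(b\bmod q)$) to $b(q - (a\bmod q)) \cdot$(stuff). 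I expect the clean way is: from $\beta\{-n\alpha\} = \alpha\{-n\beta\}$ for $n=1$ and the relation $\alpha \neq \beta$, derive that $q \mid a$ and $q \mid b$, i.e. $\alpha, \beta \in \ZZ$; indeed if $q \nmid a$ then $\{-\alpha\} = (q - (a\bmod q))/q$ is a nonzero element of $\frac{1}{q}\ZZ$, and matching denominators in $\beta\{-\alpha\} = \alpha\{-\beta\}$ together with the constraint that both sides are less than $\min(\alpha,\beta)$ pins things down. Once $\alpha, \beta \in \ZZ_{>0}$ we are done.

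The main obstacle I anticipate is the last step: cleanly ruling out non-integer rationals. The asymptotic argument that handles irrationals does not immediately work for rationals, since the fractional parts $\{-n\alpha\}$ only take finitely many values and need not come close to $1$. I would handle this by choosing $n$ to make $\{-n\alpha\}$ as large as possible — specifically $n$ with $n\alpha \equiv 1/q' \pmod 1$ for the exact denominator $q'$ of $\alpha$, giving $\{-n\alpha\} = 1/q'$ — wait, that is the \emph{smallest} nonzero value; rather I want $\{-n\alpha\} = (q'-1)/q'$, achieved by suitable $n$. Then $\beta \cdot \frac{q'-1}{q'} = \alpha\{-n\beta\} < \alpha$ forces $\beta < \frac{q'}{q'-1}\alpha$, and combined with the symmetric inequality and the rational structure, a short descent or gcd argument closes it. An alternative, possibly cleaner, route is to revisit Lemma \ref{lem:200} directly: commutativity means $\beta\lceil n\alpha\rceil = \alpha\lceil n\beta\rceil$, so $\frac{\lceil n\alpha\rceil}{\alpha} = \frac{\lceil n\beta\rceil}{\beta}$ for all $n$; the left side, as a function of $n$, jumps by $1/\alpha$ at each $n$ with $n\alpha \notin \ZZ$ and jumps by more when $n\alpha$ passes an integer — comparing jump sizes of the two sides forces $\alpha$ and $\beta$ to have the same "integrality pattern," which for distinct values is only possible when both are integers. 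I will pick whichever of these makes the write-up shortest.
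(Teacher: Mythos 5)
Your route is genuinely different from the paper's, and with one repair it works. The paper never separates rational from irrational $\alpha$: starting from \eqref{key-reln} it locates the first $n\ge 2$ at which $\lceil n\alpha\rceil$ drops to $nA-1$ (where $A=\lceil\alpha\rceil$), shows by induction that $\lceil k\beta\rceil=kB$ for $k<n$ while $\lceil n\beta\rceil=nB-1$, and gets $A=B$, hence $\alpha=\beta$, by clearing denominators --- a purely finite argument. Your reduction of \eqref{ceiling-fact} to $\beta\{-n\alpha\}=\alpha\{-n\beta\}$ is correct (since $\lceil y\rceil=y+\{-y\}$), and it buys a clean dichotomy: the identity forces $n\alpha\in\ZZ\Leftrightarrow n\beta\in\ZZ$ for every $n$, so either both numbers are irrational (and then density of $\{-n\alpha\}$ in $[0,1)$ gives $\beta\le\alpha$ and, since $\beta$ is then also irrational, $\alpha\le\beta$), or both are rational with the \emph{same exact denominator} $q'$, namely the least positive $n$ with $n\alpha\in\ZZ$. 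The cost relative to the paper is the appeal to density of $\{n\alpha\}$ for irrational $\alpha$.

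The soft spot is your rational endgame. The inequalities you write down, $\beta<\frac{q'}{q'-1}\alpha$ and its mirror image, do not close the argument --- their conjunction is easily satisfied with $\alpha\ne\beta$ (take $q'=2$, $\alpha=3/2$, $\beta=5/2$) --- and the promised ``descent or gcd argument'' is never supplied. But you do not need one: you already observed that $\alpha$ and $\beta$ share the exact denominator $q'$, so if $q'\ge 2$ then $\{-n\beta\}$ is bounded above by $(q'-1)/q'$, not merely by $1$. Choosing $n$ with $\{-n\alpha\}=(q'-1)/q'$ (possible since the numerator of $\alpha$ is coprime to $q'$) gives $\beta\cdot\frac{q'-1}{q'}=\alpha\{-n\beta\}\le\alpha\cdot\frac{q'-1}{q'}$, i.e., $\beta\le\alpha$, and symmetry gives $\alpha\le\beta$, so $\alpha=\beta$, contradicting your standing assumption; hence $q'=1$ and $\alpha,\beta\in\ZZ_{\ge 1}$. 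With that substitution your proof is complete. The alternative ``jump-size'' sketch at the end is too vague to assess and should be dropped.
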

\begin{proof}
  If $\alpha = \beta$ then commutativity clearly holds. If $\alpha,\beta$ are
  both (positive) integers, then the relation \eqref{ceiling-fact} holds for all
  $n \in \ZZ$ since the ceiling functions have no effect. Hence, commutativity
  holds.
  
  The remaining case is that where at least one of $\alpha,\beta$ is not an integer; without loss of
  generality, assume  $\alpha$ is not an integer. We write $\lceil \alpha \rceil = A \ge 1$,
  with $A > \alpha$, and $\lceil \beta \rceil = B \ge 1$.
  We show that commutativity occurs only if $\alpha=\beta$.

  Starting from Lemma \ref{lem:2}, the relation \eqref{ceiling-fact} can be
  rewritten 
  \begin{equation}\label{key-reln}
    \frac{\alpha}{\beta} = \frac {\lceil n \alpha\rceil}{\lceil n\beta\rceil},
  \end{equation}
  whenever the term $\lceil n\beta \rceil$ is non-vanishing; here $\lceil n\beta \rceil \ge 1$ holds for $n \ge 1$.

  Since $\alpha < A$, there exists a finite $n \ge 2$ such that $\lceil k\alpha
  \rceil= kA$ for $1 \le k \le n-1$, while $\lceil n\alpha \rceil = nA - 1$.
  Now, \eqref{key-reln} requires
  \[
    \frac{\alpha}{\beta} = \frac{A}{B} = \frac{\lceil k\alpha \rceil}{\lceil k\beta \rceil}
    \quad\text{for all}\ k \ge 1.
  \]
  By induction on $k \ge 1$, this relation implies $\lceil k \beta\rceil = kB$ for
  $1 \le k \le n-1$. It also implies that $\lceil n \beta \rceil = nB$ or $nB-1$. 
  The relation \eqref{key-reln} for $k=n$ becomes
  \[
    \frac{\alpha}{\beta} = \frac{A}{B} = \frac{\lceil n \alpha \rceil}{\lceil
    n\beta \rceil} = \frac{nA - 1}{\lceil n \beta \rceil},
  \]
  which rules out $\lceil n \beta \rceil = nB$. Thus, $\lceil n \beta \rceil =
  nB-1$, and we now have
  \[
    \frac{A}{B} = \frac{nA-1}{nB-1}.
  \]
  Clearing denominators yields $nAB - A = nAB - B$, whence $A=B$. Thus, we have
  $\frac{\alpha}{\beta} = \frac{A}{B} = 1$, so that $\alpha = \beta$
  as asserted.
\end{proof}

\subsection*{Case 2. Both \texorpdfstring{$\alpha$}{alpha} and
\texorpdfstring{$\beta$}{beta} are negative} \label{sec32}

We obtain a criterion which parallels Lemma \ref{lem:2} in the positive case.

\begin{lemma}\label{lem:201}
  For $\alpha, \beta <0$ and each $n \in \ZZ$, the upper level set is 
  $$
    S_{1/\alpha, 1/\beta}(n) = \left(  \beta \lfloor n \alpha \rfloor + \beta , \infty \right).
  $$
\end{lemma}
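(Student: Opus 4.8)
The plan is to mirror the proof of Lemma \ref{lem:200} by a chain of logical equivalences, starting from the defining condition
$x \in S_{1/\alpha, 1/\beta}(n) \Leftrightarrow \lfloor \tfrac{1}{\alpha}\lfloor \tfrac{1}{\beta}x\rfloor\rfloor \ge n$ and ending with an explicit inequality on $x$. The elementary facts I would invoke repeatedly are: for $m \in \ZZ$ and real $z$, one has $\lfloor z\rfloor \ge m \Leftrightarrow z \ge m$ and $\lfloor z\rfloor \le m \Leftrightarrow z < m+1$; and, for real $t$, $\lfloor z\rfloor \le t \Leftrightarrow \lfloor z\rfloor \le \lfloor t\rfloor$ (since $\lfloor z\rfloor \in \ZZ$).

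First I would strip the outer floor: since $n \in \ZZ$, the condition $\lfloor \tfrac{1}{\alpha}\lfloor \tfrac{1}{\beta}x\rfloor\rfloor \ge n$ is equivalent to $\tfrac{1}{\alpha}\lfloor \tfrac{1}{\beta}x\rfloor \ge n$. Then multiplying through by $\alpha < 0$ reverses the inequality, giving $\lfloor \tfrac{1}{\beta}x\rfloor \le n\alpha$; as the left side is an integer, this is the same as $\lfloor \tfrac{1}{\beta}x\rfloor \le \lfloor n\alpha\rfloor$, which in turn is equivalent to $\tfrac{1}{\beta}x < \lfloor n\alpha\rfloor + 1$. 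Finally, multiplying by $\beta < 0$ reverses the inequality once more and yields $x > \beta(\lfloor n\alpha\rfloor + 1) = \beta\lfloor n\alpha\rfloor + \beta$, which is precisely the asserted description $S_{1/\alpha, 1/\beta}(n) = (\beta\lfloor n\alpha\rfloor + \beta, \infty)$.

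The step I expect to require the most care is the passage from $\lfloor \tfrac{1}{\beta}x\rfloor \le \lfloor n\alpha\rfloor$ to the \emph{strict} inequality $\tfrac{1}{\beta}x < \lfloor n\alpha\rfloor + 1$: this is where the non-strict ``$\ge$'' of the original condition turns into a strict ``$>$'' after the second sign flip, and hence why the upper level set here is a half-open interval rather than the closed interval appearing in Lemma \ref{lem:200}. Apart from that, the only thing to track carefully is the bookkeeping of the two inequality reversals induced by $\alpha < 0$ and $\beta < 0$.
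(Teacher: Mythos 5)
Your proof is correct and follows exactly the same chain of equivalences as the paper's own proof of Lemma \ref{lem:201}: strip the outer floor, flip the inequality via $\alpha<0$, round to $\lfloor n\alpha\rfloor$, convert $\le$ on an integer to a strict $<$ against $\lfloor n\alpha\rfloor+1$, and flip again via $\beta<0$. You also correctly identify the one delicate point, namely where the strict inequality (and hence the open left endpoint) enters.
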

\begin{proof}
  We have the following implications:
  \reqnomode
  \begin{alignat*}{3}
    x \in S_{1/\alpha, 1/\beta} (n) & \Leftrightarrow{} \left\lfloor \frac{1}{\alpha} \left\lfloor \frac{1}{\beta} x \right\rfloor \right\rfloor \ge n &\quad& \text{(by definition)}\\
    &\Leftrightarrow{} \frac{1}{\alpha} \left\lfloor \frac{1}{\beta} x\right\rfloor \ge n &\quad& \text{(the right side is in $\ZZ$)}\\
    &\Leftrightarrow{} \left\lfloor \frac{1}{\beta} x \right\rfloor \le n \alpha &\quad& \text{(since $\alpha < 0$)}\\
    &\Leftrightarrow{} \left\lfloor \frac{1}{\beta} x \right\rfloor \le \lfloor n \alpha \rfloor &\quad& \text{(the left side is in $\ZZ$)} \\
    &\Leftrightarrow{} \frac{1}{\beta} x < \lfloor n \alpha \rfloor + 1 &\quad& \text{(the right side is in $\ZZ$)}\\
    &\Leftrightarrow{} x > \beta \lfloor n \alpha \rfloor + \beta &\quad&
    \text{(since $\beta< 0$).}\tag*{\qedhere}
  \end{alignat*}
  \leqnomode
\end{proof}

\begin{lemma}\label{lem:5}
  For $\alpha,\beta < 0$, the function $f_{1/\alpha}(x)$ commutes with
  $f_{1/\beta}(x)$ if and only if the equality 
  \[
    \beta \lfloor n \alpha\rfloor+ \beta = \alpha \lfloor n \beta \rfloor + \alpha
    \]
  holds for all integers $n \in \ZZ$.
\end{lemma}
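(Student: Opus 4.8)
The plan is to mirror the structure of the positive-case argument (Lemmas~\ref{lem:200} and~\ref{lem:2}). By Lemma~\ref{lem:201}, for $\alpha,\beta<0$ and $n\in\ZZ$ we have $S_{1/\alpha,1/\beta}(n) = \bigl(\beta\lfloor n\alpha\rfloor + \beta,\infty\bigr)$, which is an open half-line determined entirely by its left endpoint $\beta\lfloor n\alpha\rfloor + \beta$. Similarly, swapping the roles of $\alpha$ and $\beta$ gives $S_{1/\beta,1/\alpha}(n) = \bigl(\alpha\lfloor n\beta\rfloor + \alpha,\infty\bigr)$, with left endpoint $\alpha\lfloor n\beta\rfloor + \alpha$. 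As noted in the paragraph preceding Lemma~\ref{lem:200}, commutativity of $f_{1/\alpha}$ and $f_{1/\beta}$ is equivalent to the equality $S_{1/\alpha,1/\beta}(n) = S_{1/\beta,1/\alpha}(n)$ for all integers $n$ (the ``converse'' direction holding because both composite functions take values in $\ZZ$).

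The key observation is that two open half-lines $(a,\infty)$ and $(b,\infty)$ are equal if and only if $a = b$. So the condition $S_{1/\alpha,1/\beta}(n) = S_{1/\beta,1/\alpha}(n)$ holds for all $n\in\ZZ$ if and only if
\[
  \beta\lfloor n\alpha\rfloor + \beta = \alpha\lfloor n\beta\rfloor + \alpha
\]
for all $n\in\ZZ$, which is exactly the asserted equality. I would write this out in one or two sentences: invoke Lemma~\ref{lem:201} twice (once with the pair $(\alpha,\beta)$ and once with $(\beta,\alpha)$), observe that equality of the resulting half-lines is equivalent to equality of their endpoints, and conclude.

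There is essentially no obstacle here; this lemma is a direct transcription of the argument for Lemma~\ref{lem:2} into the negative-$\alpha,\beta$ setting, with the only change being that the upper level sets are now open half-lines (because of the strict inequality forced by $\beta<0$ reversing a non-strict one) rather than closed half-lines. The one point to be slightly careful about is precisely that the half-lines are open on the left, so that equality of the sets still forces equality of the endpoints — but for open half-lines this is immediate, since the left endpoint is the infimum of the set.

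\begin{proof}
  By Lemma~\ref{lem:201}, for each $n\in\ZZ$ we have
  $x\in S_{1/\alpha,1/\beta}(n)$ if and only if $x > \beta\lfloor n\alpha\rfloor + \beta$.
  Interchanging the roles of $\alpha$ and $\beta$, we likewise have
  $x\in S_{1/\beta,1/\alpha}(n)$ if and only if $x > \alpha\lfloor n\beta\rfloor + \alpha$.
  Thus $S_{1/\alpha,1/\beta}(n)$ and $S_{1/\beta,1/\alpha}(n)$ are open half-lines,
  and they coincide if and only if their left endpoints agree, i.e., if and only if
  $\beta\lfloor n\alpha\rfloor + \beta = \alpha\lfloor n\beta\rfloor + \alpha$.
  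Since commutativity of $f_{1/\alpha}$ and $f_{1/\beta}$ is equivalent to the equality
  $S_{1/\alpha,1/\beta}(n) = S_{1/\beta,1/\alpha}(n)$ for all $n\in\ZZ$, the lemma follows.
\end{proof}
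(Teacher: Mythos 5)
Your proof is correct and takes essentially the same approach as the paper: apply Lemma~\ref{lem:201} to both orderings of the pair, observe that the resulting open half-lines coincide exactly when their left endpoints agree, and use the equivalence between commutativity and equality of the integer upper level sets. Your extra remark that equality of open half-lines forces equality of endpoints (via the infimum) is a minor elaboration the paper leaves implicit.
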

\begin{proof}
  By Lemma \ref{lem:201}, we have
  $x \in S_{1/\alpha, 1/\beta} (n)$ if and only if $x > \beta \lfloor n \alpha \rfloor + \beta$.
  Similarly, we have 
  $x \in S_{1/\beta, 1/\alpha} (n)$ if and only if $x > \alpha \lfloor n \beta
  \rfloor + \alpha$, so that commutativity of the functions is equivalent to the
  desired equality.
\end{proof}


\begin{lemma}\label{lem:6}
  For $\alpha, \beta < 0$, the function $f_{1/\alpha}$ commutes with
  $f_{1/\beta}$ if and only if $\alpha = \beta$.
\end{lemma}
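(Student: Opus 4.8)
The plan is to invoke Lemma~\ref{lem:5} and observe that, in contrast to the positive case, the identity it produces already pins down $\alpha$ at a single value of $n$. One direction is immediate: if $\alpha=\beta$, then $f_{1/\alpha}$ and $f_{1/\beta}$ are literally the same function, so they commute. For the converse, suppose $f_{1/\alpha}$ commutes with $f_{1/\beta}$ with $\alpha,\beta<0$. By Lemma~\ref{lem:5} the equality
\[
  \beta \lfloor n\alpha\rfloor + \beta = \alpha \lfloor n\beta\rfloor + \alpha
\]
holds for every integer $n$. Specializing to $n=0$ and using $\lfloor 0\rfloor = 0$ collapses both sides to constants, yielding $\beta=\alpha$.

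There is essentially no obstacle to overcome here, and in particular no induction on $n$ is required, unlike in Lemma~\ref{lem:3}. The reason is structural: the additive constants $+\beta$ and $+\alpha$ appearing in Lemma~\ref{lem:5} (which arise from the \emph{strict} inequality defining the upper level sets in Lemma~\ref{lem:201}, itself a consequence of the sign reversal when $\alpha,\beta<0$) survive the substitution $n=0$, whereas in the positive case the analogous relation $\beta\lceil n\alpha\rceil = \alpha\lceil n\beta\rceil$ of Lemma~\ref{lem:2} degenerates to $0=0$ at $n=0$ and carries no information. Thus the negative-parameter regime contributes only the diagonal family $(\alpha,\alpha)$ to the classification, and in particular no reciprocal-integer solutions.
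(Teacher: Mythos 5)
Your proof is correct and follows exactly the paper's argument: apply Lemma~\ref{lem:5} at $n=0$ to get $\alpha=\beta$, with sufficiency being immediate. Your added remarks on why the additive constants make the negative case trivial compared to Lemma~\ref{lem:3} are accurate but not needed.
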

\begin{proof}
  Choose $n=0$ in Lemma \ref{lem:5}.  
  We obtain that $\alpha=\beta$ is a necessary condition for commutativity.
  But this condition is obviously sufficient.
\end{proof}

\subsection*{Case 3. \texorpdfstring{$\alpha$}{alpha} and
\texorpdfstring{$\beta$}{beta} are of opposite signs} \label{sec33}


\begin{lemma}\label{lem:7}
  For $(\alpha, \beta)$ with $\alpha\beta < 0$, the function $f_{1/\alpha}(x)$
  never commutes with $f_{1/\beta}(x)$. 
\end{lemma}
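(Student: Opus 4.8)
The plan is to compute the two upper level sets that enter the commutativity criterion of Section~\ref{sec2} and observe that they have incompatible shapes. Since commutativity of $f_{1/\alpha}$ and $f_{1/\beta}$ is symmetric in the two functions, interchanging the names of $\alpha$ and $\beta$ lets us assume $\alpha > 0$ and $\beta < 0$.

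First I would run the chain of elementary equivalences from the proof of Lemma~\ref{lem:200}, the only change being that the final step multiplies through by $\beta < 0$ and therefore reverses the inequality. This yields
\[
  S_{1/\alpha, 1/\beta}(n) = \bigl(-\infty,\ \beta \lceil n\alpha \rceil\,\bigr],
\]
a \emph{closed} half-line. Next I would run the chain from the proof of Lemma~\ref{lem:201}, but now with $\beta$ sitting in the inner floor and $\alpha > 0$ in the outer position, so that the middle step still flips the inequality (multiplication by $\beta<0$) while the final step preserves its direction (multiplication by $\alpha>0$). This yields
\[
  S_{1/\beta, 1/\alpha}(n) = \bigl(-\infty,\ \alpha\,(\lfloor n\beta \rfloor + 1)\bigr),
\]
an \emph{open} half-line.

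The key point is then purely topological. Both sets are nonempty proper subsets of $\RR$, hence each is a genuine ray bounded above; the first contains its supremum $\beta\lceil n\alpha\rceil$ while the second omits its supremum $\alpha(\lfloor n\beta\rfloor+1)$. A closed ray $(-\infty,c]$ and an open ray $(-\infty,d)$ can never coincide: equality would force $c\in(-\infty,d)$, so $c<d$, yet then $\tfrac{c+d}{2}$ would lie in the right-hand set but not the left. Therefore $S_{1/\alpha,1/\beta}(n)\neq S_{1/\beta,1/\alpha}(n)$ for every $n\in\ZZ$ — already for $n=0$, where the sets are $(-\infty,0]$ and $(-\infty,\alpha)$ and $\alpha>0$ — so the level-set criterion of Section~\ref{sec2} fails and $f_{1/\alpha}$ does not commute with $f_{1/\beta}$. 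I do not expect a genuine obstacle here; the only care needed is in tracking which sign flips the inequality at the last step of each equivalence chain, and in checking that both rays are proper and nonempty so that the closed-versus-open dichotomy genuinely applies.
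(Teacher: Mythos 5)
Your proposal is correct and follows essentially the same route as the paper: reduce to $\alpha>0$, $\beta<0$, rerun the equivalence chains of Lemmas~\ref{lem:200} and \ref{lem:201} with the appropriate sign flips to get the closed ray $(-\infty,\beta\lceil n\alpha\rceil]$ versus the open ray $(-\infty,\alpha\lfloor n\beta\rfloor+\alpha)$, and conclude that a closed and an open ray can never coincide. Your extra check at $n=0$ is a nice concrete confirmation but not a departure from the paper's argument.
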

\begin{proof}
  Without loss of generality, we may consider  $\alpha >0$ and $\beta <0$. 
  It suffices to show $S_{1/\alpha,1/\beta}(n) \ne S_{1/\beta,1/\alpha}(n)$.
  We will see that both of these upper level sets start at $- \infty$ and have a finite
  right endpoint.

  We first compute $S_{1/\alpha,1/\beta}(n)$. We can follow the same steps as in
  Lemma \ref{lem:200}, except in the last step where we have instead that $x \in
  S_{1/\alpha,1/\beta}(n)$ if and only if $x \le \beta\lceil n\alpha\rceil$ since
  $\beta < 0$. We obtain for $\alpha>0$ and $\beta <0$ that 
  \[
    S_{1/\alpha, 1/\beta}(n) = (-\infty, \beta  \lceil n\alpha \rceil]
  \]
  is a closed interval.
   
  \par Next, we compute $S_{1/\beta,1/\alpha}(n)$. We can follow the same steps as in
  Lemma \ref{lem:201}, except in the last step where we have instead that $x \in
  S_{1/\beta,1/\alpha}(n)$ if and only if $x< \alpha \lfloor n \beta\rfloor + \alpha$
  since $\alpha > 0$. We find in this case  that
  \[
    S_{1/\alpha, 1/\beta}(n) = (-\infty, \alpha \lfloor  n\beta\rfloor +\alpha)
  \]
  is an open interval.
  It follows that the two functions cannot commute.
\end{proof}
   
The case analysis is complete, and  Theorem \ref{thm:1} follows.


\section{Digital Straight Lines}\label{sec3}
The mathematical study of digital straight lines,
which are ``lines'' drawn on two-dimen\-sional graphic displays represented by
pixels, was initiated by A. Rosenfeld \cite{Ros74} in 1974.
For more recent work, see Klette and Rosenfeld \cite{KR04} and
Kiselman \cite{Kis11}. In drawing a digital image of the line
$\ell_{\alpha,\gamma}(x) := \alpha x + \gamma$, a simple recipe is to associate
to the abscissa $n = \lfloor x \rfloor$ the pixel $(\lfloor x\rfloor, \lfloor
\alpha \lfloor x \rfloor + \gamma \rfloor) \in \ZZ^2$
(more complicated recipes are used in practice). Bruckstein \cite{Bru91} noted
self-similar features of digital straight lines, relating them to the continued
fraction expansion of their slopes; see also McIlroy \cite{McI91}. In contrast,
our proof of  Theorem \ref{thm:1} does not require continued fractions.

From the digital straight line viewpoint, one can view $\lfloor \alpha \lfloor \beta
x\rfloor\rfloor$ as a step function approximation to the straight line 
$\ell_{\alpha \beta}(x) := \alpha \beta x$ in the sense that the difference
function
\[
  h_{\alpha,\beta}(x) := \lfloor \alpha \lfloor \beta x\rfloor\rfloor - \alpha \beta x 
\]
is a bounded function. This difference function is explicitly given by a
combination of iterated fractional part functions
\(
  h_{\alpha,\beta}(x) = -\alpha \{\beta x\} - \{\alpha(\beta x - \{\beta x\})\}
\)
so is a bounded {\em generalized polynomial} in the sense of Bergelson and Leibman
\cite{BerL07}.
The commutativity problem studied
here is that of determining when the generalized polynomial
$h_{\alpha, \beta} (x) - h_{\beta, \alpha}(x)$ is identically zero.

Commutativity questions under composition can be considered for general
digital straight lines 
such as $f_{\alpha, \gamma}(x) := \lfloor \alpha x + \gamma\rfloor$.
However, general linear functions $\ell_{\alpha, \gamma}(x) = \alpha x + \gamma$
with distinct nonzero $\gamma$ do
not commute under composition. We do not know whether any interesting new
commuting pairs occur in this more general context.

\subsection*{Acknowledgement}
The authors thank S. Mori for a helpful comment. The first author received financial support from NSF grant DMS-1401224.

\end{document}